\theoremstyle{plain}
\newtheorem{thm}{Theorem}
\newtheorem{lem}{Lemma}[section]
\newtheorem{prop}{Proposition}[section]
\newtheorem{cor}{Corollary}[section]
\theoremstyle{definition}
\newtheorem{defn}{Definition}[section]
\newtheorem*{thmB}{Theorem B}
\newtheorem*{thmA}{Theorem A}
\newtheorem*{thmC}{Theorem C}
\newtheorem*{thmD}{Theorem D}
\newtheorem*{thmB'}{Theorem B$^\prime$}
\newtheorem*{thmC'}{Theorem C$^\prime$}
\theoremstyle{remark}
\newcommand{\R}{\mathbb{R}}
\newcommand{\Z}{\mathbb{Z}}
\newcommand{\N}{\mathbb{N}}
\newcommand{\F}{\mathbb{F}}
\newcommand{\PP}{\mathbb{P}}
\newcommand{\EE}{\mathbb{E}}
\newcommand{\La}{\Lambda}
\newcommand{\de}{\delta}
\newcommand{\De}{\Delta}
\newcommand{\eps}{\varepsilon}
\newcommand{\1}{\mathbf{1}}
\newcommand{\subs}{\subseteq}
\newcommand{\EXP}{\text{Exp}}
\numberwithin{equation}{section}
\newcommand{\eq}{\begin{equation}}
\newcommand{\ee}{\end{equation}}
\newcommand{\eqa}{\begin{eqnarray}}
\newcommand{\eea}{\end{eqnarray}}
\title{Polynomial progressions in the generalized twin primes}
\author{
Andrew Lott \and
Nagendar Reddy Ponagandla
}
\begin{document}

\maketitle

\begin{abstract} 
By Maynard's theorem and the subsequent improvements by the Polymath Project, there exists a positive integer $b\leq 246$ such that there are infinitely many primes $p$ such that $p+b$ is also prime. Let $P_1,...,P_t\in \Z[y]$ with $P_1(0)=\cdots=P_t(0)=0$. We use the transference argument of Tao and Ziegler to prove there exist positive integers $x, y,$ and $b \leq 246 $ such that $x+P_1(y),x+P_2(y),...,x+P_t(y)$ and $x+P_1(y)+b,x+P_2(y)+b,...,x+P_t(y)+b$ are all prime. Our work is inspired by Pintz, who proved a similar result for the special case of arithmetic progressions. 
\end{abstract}

\section{Introduction}

In their landmark paper \cite{GT06}, Green and Tao used the transference principle and Szemerédi's theorem to prove that the primes contain arbitrarily long arithmetic progressions. A key component of their proof was the construction of a pseudorandom measure inspired by the Goldston-Yildirim sieve \cite{GY05}. This groundbreaking result has since motivated numerous generalizations. Notably, Tao and Ziegler \cite{TZ08} showed that the primes also contain polynomial progressions of the form
\[
  x + P_1(y),\ x + P_2(y),\ \dots,\ x + P_t(y),
\]
where $P_1, \dots, P_t \in \mathbb{Z}[y]$ and $P_1(0) = \cdots = P_t(0) = 0$. One of the main ingredients in their proof was a relative PET induction scheme.

Since the release of \cite{GT06} and \cite{TZ08}, many simplifications have been developed. Notably, Gowers found a much shorter way to perform the transference step using polynomial approximation and the geometric Hahn–Banach theorem \cite{Gow10}, and Conlon-Fox-Zhao introduced a densification technique which further streamlined the transference step \cite{CFZ15}.

Incorporating these simplifications, in  \cite{Lott}, Lott-Magyar-Petridis-Pintz generalized the result of Tao and Ziegler to certain non-degenerate polynomial configurations in the prime lattice. Much of our current paper is based on \cite{Lott}.

Rather than seeking new patterns in the primes, one might instead ask for patterns in other sparse subsets of $\mathbb{N}$. Inspired by the approach of Green and Tao, Pintz used a transference argument to prove the following theorem in \cite{pintz1}.

\begin{thmA}
Let $k,j\in \N$. Let $\{h_1,...,h_k\}$ be an admissible $k$-tuple. Suppose there exists $\eps_0>0$ for which 
\[
|\mathcal{A}\cap [N]|\gg \frac{N}{\log^kN},
\]
where 
\[\mathcal{A}\subset \{n: P^{-}((n+h_1)\cdots(n+h_k))>n^{\eps_0}\}\footnote{For \( m \in \mathbb{Z} \), \( P^{-}(m) \) denotes the smallest prime divisor of \( m \).}.\]
Then $\mathcal{A}$ contains a $j$-term arithmetic progression. 
\end{thmA}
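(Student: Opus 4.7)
\emph{Proof proposal.} The plan is to adapt the Green-Tao transference strategy to the sparse set $\BB=\{n\in[N]: P^-((n+h_1)\cdots(n+h_k))>n^{\eps_0}\}$ of almost-prime $k$-tuples. A standard Selberg upper-bound sieve gives $|\BB\cap[N]|\ls N/\log^k N$, so the hypothesis says $\mA$ occupies a positive proportion of $\BB$.

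I would begin with the $W$-trick: fix a slowly growing $w=w(N)$ and $W=\prod_{p\leq w}p$, decompose $\mA$ by residues $n\equiv b\pmod W$, and keep only those classes with $b+h_i$ coprime to $W$ for every $i$. Admissibility of $\{h_1,\dots,h_k\}$ ensures such classes exist, and pigeonhole produces one on which the pulled-back set $\mA'\subset[N/W]$ retains density $\gs 1/\log^k N$ weighted by the appropriate singular series.

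Next I would build a pseudorandom majorant of the normalised indicator of $\mA'$ using a Goldston-Pintz-Yildirim-type Selberg weight
\[
\nu(n)=\frac{\log^k N}{C}\Bigl(\sum_{d\mid\prod_i(Wn+b+h_i)} \lambda_d\Bigr)^2,
\]
with $\lambda_d$ supported on $d\leq N^{\eta}$ for a small $\eta=\eta(\eps_0)<\eps_0$. Since every prime factor of $\prod_i(Wn+b+h_i)$ for $n\in\mA'$ exceeds $N^{\eps_0}$, only $d=1$ contributes to the inner sum, giving $\1_{\mA'}\ls\nu$ after appropriate normalisation. A GPY-type sieve calculation then verifies the Green-Tao linear forms and correlation conditions for $\nu$, or equivalently the weaker Conlon-Fox-Zhao conditions exploited in \cite{Lott}.

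With $\nu$ in hand, $f=c\log^k N\cdot\1_{\mA'}$ satisfies $f\leq\nu$ and $\EE_{n\in[N/W]}f(n)\gs 1$. The transference principle (via Gowers's Hahn-Banach argument or the Conlon-Fox-Zhao densification) decomposes $f=g+h$ with $0\leq g\ls 1$, $\EE g\gs 1$, and $\|h\|_{U^{j-1}}=o(1)$; Szemerédi's theorem applied to $g$ produces many $j$-APs, while the generalized von Neumann theorem kills the $h$-contribution, yielding a $j$-AP inside $\mA'$ and hence inside $\mA$. The principal obstacle is the verification of the linear forms and correlation conditions for $\nu$: this is essentially the $k$-tuple GPY sieve calculation, whose local factors at primes $p\leq w$ remain non-degenerate thanks to the admissibility of $\{h_1,\dots,h_k\}$.
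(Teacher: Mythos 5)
Your proposal follows the same Green--Tao transference strategy that Pintz used in \cite{pintz1}, which is where the paper cites Theorem A without reproducing the proof; the paper's own Sections 3--6 prove the polynomial generalization (Theorem 1) by the same method, so your sketch matches both. Two technical points worth flagging. First, the membership condition defining $\mathcal{A}$ gives $P^-\bigl(\prod_i(Wn+b+h_i)\bigr)>(Wn+b)^{\eps_0}$, \emph{not} $N^{\eps_0}$: for small $n$ this threshold is far below the sieve level, so the pointwise majorization $\1_{\mathcal{A}'}\lesssim\nu$ requires restricting $n$ to a range like $[\sqrt N,N-\sqrt N]$ where $(Wn+b)^{\eps_0}\geq N^{\eps_0/2}>R$; the paper builds this truncation into the definition of $f_A$. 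Second, the paper uses a product-form sieve $\prod_{i=1}^k\bigl(\sum_{d_i\mid Wx+b+h_i}\mu(d_i)\chi(\log d_i/\log R)\bigr)^2$ with a smooth cutoff $\chi$ rather than your single joint divisor sum over $d\mid\prod_i(Wn+b+h_i)$, which is what lets the Fourier-analytic correlation estimate of Section 4 (following Tao--Ziegler) go through cleanly. Both sieve forms and both transference variants you mention (a Green--Tao style decomposition with $U^{j-1}$ control, vs.\ the Conlon--Fox--Zhao densification the paper imports as Theorem D) can be made to work, so the overall plan is sound once the truncation above is supplied.
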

Recall that a set $\{h_1, . . .,h_k\}$ of distinct non-negative integers is admissible if, for
every prime $p$, there is an integer $a_p$ such that $a_p\not\equiv h \bmod{p}$ for all $h\in \{h_1,...,h_k\}$.

When $k = \epsilon_0 = 1$, Theorem A recovers the Green--Tao theorem. For $k = 2$ and $\epsilon_0 = 1/10$, this recovers a result of Zhou, which gives arithmetic progressions in the Chen primes \cite{Zhou}. (Zhou defined the Chen primes to be the set of primes $p$ such that $p+2$ is either a prime or a product of two primes and $P^{-}(p+2)\geq p^{1\slash 10}$.)   Historically, verifying that $|\mathcal{A} \cap [N]| \gg N / \log^k N$ posed a major challenge. (Here, $\mathcal{A}$ is defined as in Theorem A.) However, now we have the following version of Maynard’s theorem, which was proved by the Polymath Project (see Remark 32 in \cite{polymath8b}). 
\begin{thmB}[Maynard, Polymath]
Let \( m \in \mathbb{N} \). There exists \( k \) large enough (depending on \( m \)) such that for any admissible \( k \)-tuple \( \{h_1, \ldots, h_k\} \), there exists \( \varepsilon_0 > 0 \) such that
\[
|\mathcal{A} \cap [N]| \gg \frac{N}{\log^k N},
\]
where
\begin{align}\label{specialset}
\mathcal{A} = \{ n : &P^{-}((n + h_1)\cdots(n + h_k)) > n^{\varepsilon_0} \text{ and}\\
&\text{at least } m + 1 \text{ of } n + h_1, \ldots, n + h_k \text{ are prime} \}\notag.
\end{align}
\end{thmB}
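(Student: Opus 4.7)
The plan is to apply the higher-rank Selberg sieve of Maynard, as refined by the Polymath project in \cite{polymath8b}, and graft onto it a fundamental-lemma sifting layer that enforces the $P^{-}$ condition. Fix the admissible tuple $\{h_1,\ldots,h_k\}$, and after the standard $W$-trick restrict to $n$ in a suitable residue class $b \pmod W$, where $W = \prod_{p \leq w} p$ and $w$ grows slowly with $N$. Introduce the Maynard--Tao weights
\[
w_n^{(1)} = \Bigl(\sum_{d_i \mid n+h_i\ \forall i} \lambda_{d_1,\ldots,d_k}\Bigr)^{\!2},
\]
where $\lambda_{d_1,\ldots,d_k}$ is built in the usual way from a smooth symmetric function $F$ supported on $\{(t_1,\ldots,t_k) : t_i \geq 0,\ \sum t_i \leq 1\}$ at sieve level $R = N^{\theta/2 - o(1)}$, with $\theta < 1/2$ the effective Bombieri--Vinogradov exponent employed in the Polymath argument. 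To force the $P^{-}$ condition, multiply $w_n^{(1)}$ by a second nonnegative Selberg upper-bound weight $w_n^{(2)}$ of fundamental-lemma type at level $N^{\varepsilon_0}$, constructed so that $w_n^{(2)} \geq \mathbf{1}_{P^{-}((n+h_1)\cdots(n+h_k)) > N^{\varepsilon_0}}$, and set $w_n = w_n^{(1)}\,w_n^{(2)}$.

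Next I would compute
\[
S_1 = \sum_{\substack{n \leq N \\ n \equiv b\,(W)}} w_n, \qquad S_2 = \sum_{i=1}^k \sum_{\substack{n \leq N \\ n \equiv b\,(W)}} w_n \mathbf{1}_{n+h_i \text{ prime}},
\]
which admit Maynard--Polymath asymptotics in terms of the usual integral functionals $I(F)$ and $J_i(F)$, multiplied by an explicit bounded factor $G(\varepsilon_0) > 0$ contributed by the auxiliary sieve. Since the Polymath variational lemma produces $F$ with $\sum_i J_i(F)/I(F)$ as large as one likes (in fact growing like $\log k$) by taking $k$ large, one can ensure that $S_2 > m \log(3N)\, S_1$ after choosing $k$ sufficiently large in terms of $m$, and then $\varepsilon_0$ small enough that $G(\varepsilon_0)$ does not spoil the inequality.

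By a pigeonhole on the weighted sum, this implies the lower bound $\sum_{n \in \mathcal{A} \cap [N]} w_n \gg S_1$, and a pointwise divisor-type upper bound on $w_n$ converts this into the unweighted estimate $|\mathcal{A} \cap [N]| \gg N/\log^k N$ demanded by the statement. The main technical obstacle will be verifying that the auxiliary factor $w_n^{(2)}$ does not disrupt Maynard's delicate asymptotics. Expanding the squares in $w_n^{(1)}$ and $w_n^{(2)}$ yields a sum over quadruples $(d_i),(d_i'),e$ whose evaluation, via the Chinese remainder theorem and the local Euler products, requires the least common multiple of the various sieve parameters to remain $\ll N^{\theta - o(1)}$. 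Taking $\varepsilon_0$ sufficiently small in terms of $\theta$ supplies enough room for the standard Euler-product manipulations to factor cleanly, so that one recovers Maynard's main term multiplied by a bounded nonvanishing function of $\varepsilon_0$, which closes the argument.
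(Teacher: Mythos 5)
The paper does not prove Theorem~B; it cites it (Remark~32 of Polymath~8b). So the comparison here is of your sketch against the actual Pintz/Polymath argument.

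Your scaffolding is right — Maynard--Tao weights $w_n^{(1)}$, the $W$-trick, the sums $S_1$, $S_2$, the variational estimate $\sum_i J_i(F)/I(F) \gg \log k$, and the final conversion from a weighted to an unweighted count via a pointwise bound on $w_n$ (which is legitimate precisely because the $P^-$ condition makes $(n+h_1)\cdots(n+h_k)$ have $O_k(1/\varepsilon_0)$ prime factors, hence bounded divisor count). The gap is in the mechanism you use to enforce the $P^-$ condition. You multiply by a Selberg/fundamental-lemma \emph{upper}-bound weight $w_n^{(2)} \geq \1_{P^-((n+h_1)\cdots(n+h_k)) > N^{\varepsilon_0}}$. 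Because it is an upper bound, $w_n^{(2)}$ is nonnegative and $\geq 1$ on the sifted set, but it is not supported on the sifted set — it can be of order one on integers with small prime factors. Consequently, the inequality $S_2 > m\log(3N)\,S_1$ with $w_n = w_n^{(1)} w_n^{(2)}$ only shows that the weight locates some $n$ with $\geq m+1$ primes among $n+h_1,\dots,n+h_k$; it does \emph{not} follow that those $n$ lie in $\mathcal{A}$, so the step ``by a pigeonhole $\ldots$ $\sum_{n\in\mathcal{A}\cap[N]} w_n \gg S_1$'' does not go through. The reverse inequality (a nonnegative pointwise minorant of $\1_{P^-(\cdots)>N^{\varepsilon_0}}$) is what you would actually need, and no sieve of fundamental-lemma type gives that nonnegatively.

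What the Pintz/Polymath argument actually does is keep $w_n = w_n^{(1)}$ and bound directly, from above, the contribution to $S_1$ and $S_2$ of those $n$ for which some $p\mid n+h_i$ with $w < p \leq N^{\varepsilon_0}$, namely
\[
\sum_{i=1}^{k} \sum_{w < p \leq N^{\varepsilon_0}} \;\sum_{\substack{n\leq N,\ n\equiv b\,(W)\\ p\mid n+h_i}} w_n^{(1)}.
\]
The crucial point — which a crude ``$\ll S_1/p$'' bound would \emph{not} give, since $\sum_{w<p\leq N^{\varepsilon_0}} 1/p$ is not small — is that the Maynard weight restricted to $p\mid n+h_i$ gains an extra factor of roughly $(\log p/\log R)^2$, so the sum over $p$ produces a factor that is $O(\varepsilon_0^2/\theta^2)$ and can be made arbitrarily small by shrinking $\varepsilon_0$. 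With that bound in hand, one subtracts this error from $S_2 - m\log(3N)S_1$ and only then pigeonholes onto $\mathcal{A}$. Your proposal omits this key gain and substitutes an auxiliary sieve pointing in the wrong direction; to close the gap you should replace $w_n^{(2)}$ by a direct estimate, of the above type, for the mass carried by $n$ with a small prime factor.
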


The idea of using Theorem B came from a paper of Parshall \cite{Hans}, where he proved the existence of large configurations of irreducible polynomials over finite fields that are separated by low degree polynomials. With Theorem B, we are able to show the following.
\begin{thm}
Let $\mathcal{A}$ be defined as in \eqref{specialset}, and let $P_1,...,P_t\in \Z[y]$ with $P_1(0)=\cdots=P_t(0)=0$. Let $\delta>0$ and $N>N(\delta,P_1,...,P_t)$. If $A\subset \mathcal{A}\cap [N]$ with $|A| \geq \delta |\mathcal{A}\cap [N]|$, then there exist positive integers $x$ and $y$ such that $\{x+P_1(y),...,x+P_t(y)\}\subset A$. 
\end{thm}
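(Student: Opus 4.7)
The plan is to follow the transference principle of Green-Tao \cite{GT06}, with the polynomial extension of Tao-Ziegler \cite{TZ08} as streamlined by Gowers \cite{Gow10} and Conlon-Fox-Zhao \cite{CFZ15}, in the form implemented for polynomial progressions in the primes in \cite{Lott}. By Theorem B, $\mathcal{A}\cap [N]$ has density of order $(\log N)^{-k}$, so I first normalize by setting $f := c\,(\log N)^k \1_A$ for an appropriate $c>0$, producing a function on $[N]$ with $\EE_n f(n) \gs \delta$. The goal is then to produce a polynomial progression in $\mathrm{supp}(f) = A$ by running a relative polynomial Szemer\'edi argument against a pseudorandom majorant.

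The central analytic step is to construct a measure $\nu \colon [N] \to \R_{\geq 0}$ with $0 \leq f \leq \nu$ and $\EE_{n\in[N]}\nu(n) = 1+o(1)$. The natural candidate is a normalized Goldston-Yildirim-type sieve weight
$$
\nu(n) = c'(\log N)^k \Bigl( \sum_{d \mid (n+h_1)\cdots(n+h_k)} \lambda_d \Bigr)^2,
$$
with the $\lambda_d$ supported on $d \leq N^{\eta}$ for some small $\eta>0$. Such $\nu$ dominates the indicator function of the set on which each $n+h_i$ has only prime factors exceeding $N^{\eps_0}$, and so majorizes $\1_{\mathcal{A}}$ up to a constant (absorbed into $c'$); the prime condition in the definition of $\mathcal{A}$ is not an obstruction since $\mathcal{A}$ lies inside the almost-prime tuple set.

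With $\nu$ in place, I would apply the Conlon-Fox-Zhao densification exactly as in \cite{Lott}. Assuming $\nu$ satisfies the appropriate linear forms condition against the system $\{x+P_i(y)\}_{i=1}^t$, the count $\EE_{x,y} \prod_{i=1}^{t} f(x+P_i(y))$ can be replaced, up to a small error, by $\EE_{x,y}\prod_{i=1}^{t} g(x+P_i(y))$ for a bounded function $g$ with mean $\gs \delta$. The polynomial Szemer\'edi theorem of Bergelson-Leibman (in its finitary form) then gives a positive lower bound for the bounded count in terms of $\delta$ alone, for $N$ large; pushing this bound back through the densification yields a progression inside $A$.

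The main obstacle will be verifying the linear forms condition for this $\nu$ against the multilinear forms generated by the polynomial configuration. Because the shifts $y\mapsto P_i(y)$ are nonlinear, the local (prime-by-prime) sieve calculations do not factor as cleanly as in the arithmetic progression setting treated by Pintz, and one must show that the Goldston-Yildirim singular series behaves uniformly across the polynomial tensor forms that arise when expanding squared sieve weights. However, since $\mathcal{A}$ is cut out by a fixed admissible tuple $\{h_1,\dots,h_k\}$ that is independent of $(x,y)$, the required asymptotics should follow by a direct adaptation of the analogous computations in \cite{Lott}, together with the change-of-variables reductions used there to bring the polynomial count into the scope of the pseudorandomness hypothesis.
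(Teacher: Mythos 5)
Your overall architecture matches the paper — sieve majorant, transference via densification, Bergelson--Leibman — but there is a genuine gap: you have omitted the $W$-trick entirely, and without it no choice of $\lambda_d$ will make your $\nu$ satisfy the needed forms condition. The set $\mathcal{A}$ is concentrated on residue classes $n\bmod p$ with $n+h_i\not\equiv 0\ (p)$ for each small prime $p$ and every $i$, so the weight $\bigl(\sum_{d\mid(n+h_1)\cdots(n+h_k)}\lambda_d\bigr)^2$ has a singular series built in, and the correlations $\EE_n\prod_j\nu(n+r_j)$ pick up shift-dependent local factors that do not tend to $1$. The paper fixes this (Section 3) by choosing $W$ a primorial and a residue $b\in X_W$ with $b\not\equiv -h_i\ (W)$, rescaling $n\mapsto Wn+b$, and inserting the normalization $(\phi(W)\log R/W)^k$ into both $f_A$ (display \eqref{f_A}) and $\nu_{\mathcal{A}}$; the small-prime Euler factors then cancel in Lemma 4.2, which is precisely what makes the right-hand side of the correlation estimate equal $1+o(1)$ plus a bad-prime error.

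Second, you invoke a \emph{linear} forms condition, but for nonlinear shifts $x+P_j(y)$ the input to the transference theorem is a \emph{polynomial} forms condition: one needs $\EE_{\vec\ell\in[H]^d}\EE_{x\in[N]}\prod_j\nu(x+Q_j(\vec\ell))=1+o(1)$ with $Q_j$ polynomials and $H=\log^{\sqrt{L}}N$ a short range. You flag this as the ``main obstacle'' but the proposal does not say how to resolve it; in the paper this is Proposition 5.1, whose proof averages the bad-prime term $O\bigl(\EXP(O(\sum_{p\in\PP_b(\vec\ell)}1/p))\bigr)$ over $\vec\ell$ and shows via a dyadic decomposition that it is $o(1)$ on average even though it is not small for individual $\vec\ell$. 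Finally, a smaller divergence worth noting: you use the GPY-style weight on the product $\prod_i(n+h_i)$, whereas the paper uses the factored form $\prod_i\bigl(\sum_{d_i\mid Wx+b+h_i}\mu(d_i)\chi(\log d_i/\log R)\bigr)^2$; only the latter plugs directly into the Tao--Ziegler correlation machinery (their Corollary 10.5 and Lemma 9.5 on good/bad primes), so your choice would require re-deriving the Euler-product estimates rather than quoting them.
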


By the pigeonhole principle, there exist $1\leq i_1<i_2<\cdots< i_{m+1}\leq k$ such that the set 
\begin{equation}\label{specialtuple}\{n:P^{-}((n+h_1)\cdots(n+h_k))>n^{\epsilon_0}\hbox{ and } n+h_{i_1},...,n+h_{i_{m+1}}\hbox{ are all prime}\}\end{equation}
has positive relative upper density in $\mathcal{A}$. \footnote{The relative upper density of $B\subset \mathcal{A}$ is defined by $\limsup_{N\to \infty}\frac{|B\cap [N]|}{|\mathcal{A}\cap [N]|}$. } Thus, we have the following corollary. 

\begin{cor}
Let $\mathcal{A}$ be defined as in \eqref{specialset} and $h_{i_1},...,h_{i_{m+1}}$ be defined as in \eqref{specialtuple}. Let $P_1,...,P_t\in \Z[y]$ with $P_1(0)=\cdots=P_t(0)=0$. There exist positive integers $x$ and $y$ such that $\{x+h_{i_{\ell}}+P_j(y): 1\leq \ell\leq m+1, 1\leq j\leq t\}$ are all prime. 
\end{cor}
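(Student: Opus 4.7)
The plan is to deduce the Corollary almost immediately from Theorem~1, using the pigeonhole argument already sketched in the paragraph preceding the Corollary. The only real work is to make that pigeonhole step precise enough that it produces a subset of $\mathcal{A}$ to which Theorem~1 can be applied.

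First, I would formalize the pigeonhole. For each $n \in \mathcal{A}$, by definition at least $m+1$ of the values $n+h_1,\ldots,n+h_k$ are prime, so we may select a subset $S(n) \subset \{1,\ldots,k\}$ of size exactly $m+1$ with $n+h_i$ prime for every $i \in S(n)$. There are only $\binom{k}{m+1}$ possible choices of such a subset, so some fixed $S = \{i_1,\ldots,i_{m+1}\}$ must be chosen for a set of $n$'s having positive relative upper density in $\mathcal{A}$. Define
\[
B = \{n \in \mathcal{A} : n+h_{i_1},\ldots,n+h_{i_{m+1}} \text{ are all prime}\}.
\]
Then there exists $\delta > 0$ such that $|B \cap [N]| \geq \delta\,|\mathcal{A} \cap [N]|$ for infinitely many $N$.

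Next, I would invoke Theorem~1. Pick any $N$ which is both larger than the threshold $N(\delta, P_1,\ldots,P_t)$ from Theorem~1 and large enough that the density inequality above holds. Setting $A := B \cap [N]$, Theorem~1 produces positive integers $x$ and $y$ with $\{x+P_1(y),\ldots,x+P_t(y)\} \subset A \subset B$. Unwinding the definition of $B$, this says that for each $1 \le j \le t$ the integer $n := x+P_j(y)$ satisfies that $n+h_{i_\ell}$ is prime for every $1 \le \ell \le m+1$, which is exactly the conclusion of the Corollary.

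I do not anticipate any genuine obstacle: all of the analytic content sits inside Theorem~1 itself (and Theorem~B, which supplies the density of $\mathcal{A}$). The only minor point to watch is that relative upper density is defined as a $\limsup$, so the density lower bound on $B$ only holds along a subsequence of $N$; but a single sufficiently large such $N$ is all that Theorem~1 requires, so this causes no trouble.
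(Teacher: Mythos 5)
Your proposal is correct and matches the paper's argument exactly: the paper also uses pigeonhole over the $\binom{k}{m+1}$ subsets to find $i_1,\ldots,i_{m+1}$ so that the set \eqref{specialtuple} has positive relative upper density in $\mathcal{A}$, and then applies Theorem~1 to a restriction of that set to a suitably large $[N]$. Your observation about the $\limsup$ only needing to hold along a subsequence is a correct and worthwhile detail that the paper leaves implicit.
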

In the case \( m = 1 \), the Polymath Project~\cite{polymath8b} constructed an admissible \( 50 \)-tuple \( \{h_1, \dots, h_{50}\} \) such that 
\[
\max\{ |h_i - h_j| : 1 \leq i < j \leq 50 \} = 246
\]
and such that the conclusion of Theorem B holds. From this, we obtain the following corollary.

\begin{cor}
There exists a positive integer $b\leq 246$ for which the following holds. Let $P_1,...,P_t\in \Z[y]$ with $P_1(0)=\cdots=P_t(0)=0$. There exist $x,y\in \N$ such that 
$$x+P_1(y),x+P_2(y),...,x+P_t(y)\hbox{ and } x+P_1(y)+b,x+P_2(y)+b,...,x+P_t(y)+b$$
are all prime. 
\end{cor}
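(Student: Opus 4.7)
The plan is to derive this statement as a direct specialization of the preceding corollary, combined with the Polymath Project's construction of a narrow admissible tuple. The integer $b$ will emerge as the difference between two distinguished elements of that tuple, and the final conclusion will follow by a translation.

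First, I would set $m = 1$ in Theorem B and fix $\{h_1, \dots, h_{50}\}$ to be the admissible $50$-tuple constructed by the Polymath Project, for which $\max\{|h_i - h_j| : 1 \leq i < j \leq 50\} = 246$ and the conclusion of Theorem B holds. This produces a set $\mathcal{A}$ as in \eqref{specialset} with $|\mathcal{A} \cap [N]| \gg N / \log^{50} N$.

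Next, I would carry out the pigeonhole step that appears just before the preceding corollary. Every $n \in \mathcal{A}$ yields at least one pair $(i, j)$ with $i < j$ such that both $n + h_i$ and $n + h_j$ are prime, and since there are only $\binom{50}{2}$ such pairs, some fixed pair $(i_1, i_2)$ must produce a set as in \eqref{specialtuple} of positive relative upper density in $\mathcal{A}$. Crucially, this pair is determined by $\mathcal{A}$ and the $50$-tuple alone, before any polynomials are introduced; swapping labels if necessary, we may assume $h_{i_1} < h_{i_2}$, and we set
\[
b := h_{i_2} - h_{i_1}, \qquad 1 \leq b \leq 246.
\]

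Finally, given any $P_1, \dots, P_t \in \mathbb{Z}[y]$ with $P_j(0) = 0$, I would invoke the preceding corollary with this pair $(i_1, i_2)$ to obtain positive integers $x, y$ such that every element of $\{x + h_{i_\ell} + P_j(y) : \ell \in \{1, 2\},\ 1 \leq j \leq t\}$ is prime. Writing $x' := x + h_{i_1}$, we then have that $x' + P_j(y)$ and $x' + P_j(y) + b = x + h_{i_2} + P_j(y)$ are prime for every $1 \leq j \leq t$, which is precisely the claim. No genuine obstacle arises here: Theorem~1 and the preceding corollary carry the analytic weight, and the Polymath construction supplies the quantitative bound. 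The only subtlety worth noting is that the pair $(i_1, i_2)$, and hence $b$, is chosen before the polynomials are specified, so a single $b \leq 246$ works uniformly across all $P_1, \dots, P_t$.
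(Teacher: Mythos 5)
Your proof is correct and follows the same route the paper uses implicitly: specialize to $m=1$ with the Polymath $50$-tuple, apply the pigeonhole argument preceding the corollary to fix a pair $(i_1,i_2)$ (notably, independent of the polynomials), invoke the preceding corollary, and translate by $h_{i_1}$ to realize $b = h_{i_2}-h_{i_1} \leq 246$. Your remark about the pair being chosen before the polynomials is exactly the subtlety that makes the single $b$ work uniformly, and it is correctly handled.
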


\section{Notation}

Throughout the paper, $\ll$ will be the Vinogradov notation, $\displaystyle{\Z_{n} = \Z / n\Z}$, $[N] = \{1,\cdots,N\}$, and we use the convenient averaging notation: 
\[\EE_{x\in B}g(x)=\frac{1}{|B|}\sum_{x\in B}g(x).\]
We also use 
\[\exp(x)=e^x \hbox{ and }\EXP(x)=e^x-1.\]
In addition, for two integers $a,b$, let $[a,b]$ denote the least common multiple of $a$ and $b$. 

\section{Sketch of the proof}\label{Sketch}

Let $\mathcal{A}$ be defined as in \eqref{specialset} and suppose $A\subs \mathcal{A}\cap [N']$ with $|A|\geq \de\,|\mathcal{A}\cap [N']|$. We apply the transference principle to prove Theorem 1. Informally, the idea is that if $\mathcal{A}$ is pseudorandom in the appropriate sense, then our relatively dense subset $A \subset \mathcal{A}$ can be modeled by a dense subset $B \subset \mathbb{N}$. Since $B$ has positive density, it contains polynomial progressions by a theorem of Bergelson and Leibman \cite{BL96}. Hence, we expect $A$ to inherit similar structure. While the actual argument is more delicate, this heuristic provides a helpful mental picture.

With this heuristic in mind, we require an appropriate version of the polynomial extension of Szemer\'{e}di's theorem due to Bergelson-Leibman \cite{BL96}. Before stating this theorem, we need the following notation. Let $P_1,...,P_t\in \Z[y]$ and $f:[N]\to\R$. We define the normalized counting operator $\Lambda_{P_1,...,P_t}$ by
\[\Lambda_{P_1,....,P_t}f:=\EE_{y\in [M]}\EE_{x\in [N]}f(x+P_1(y))\cdots f(x+P_t(y)),\]
where $M=\log^LN.$ Notice that if $f=\1_B$ is the indicator function for $B\subset[N]$, then $\Lambda_{P_1,....,P_t}f$ gives a normalized count of polynomial progressions in $B$. Now we can state the relevant version of the polynomial Szemer\'{e}di theorem as given in \cite{TZ08,TZ14}. 
\begin{thmC} Let $\de>0$, $P_1,...,P_t\in \Z[y]$, $P_1(0)=\cdots=P_t(0)=0$. Let $N>N(\delta, P_1,...,P_t)$. If $g:[N]\to [0,1]$ is a function satisfying $\EE_{x\in [N]} g(x)\geq\de$, then one has 
\eq
\La_{P_1,...,P_t}g\geq c(\de)-o(1),
\ee
where $c(\de)>0$ is a constant that depends only on $\de$.
\end{thmC}

In Section \ref{Wtrick}, we use the $W$-trick to construct a function $f_A$ which is a scaled indicator function for $A$ on a unit congruence class $b\bmod W$, where $W$ is the product of small primes; essentially $f_A(x)$ is a constant multiple of $\1_A(Wx+b)$. This allows us to deal with the fact that the primes are not equidistributed across congruence classes mod $p$ for small primes $p$. Using Theorem~B and the pigeonhole principle, $A$ is already dense enough that after restricting to a suitable $b\bmod W$ and applying the natural normalization, we have $\EE_{x\in[N]}f_A(x)\ge \delta'>0$, where
$N=\lfloor N'/W\rfloor$.

Then we construct a pseudorandom measure $\nu_{\mathcal{A}}:\mathbb{N}\to \R_{\geq 0}$ which majorizes $f_A$. In Sections \ref{correlationestimate} and \ref{polynomialforms}, we prove $\nu_{\mathcal{A}}$ satisfies a pseudorandomness property called the polynomial forms condition, as defined in Lott-Magyar-Petridis-Pintz \cite{Lott}. In Section \ref{transferenceprinciple} we apply a transference theorem from Lott-Magyar-Petridis-Pintz \cite{Lott}. This allows us to model $f_A$ by a function $g:[N]\to[0,1]$ with $\EE_{x\in[N]}g(x)\ge \delta'/2$ and with essentially the same count of the polynomial configurations as given by $\Lambda_{Q_1,...,Q_t}g$. (After the $W$-trick we work with the rescaled polynomials $Q_j(y):=P_j(Wy)/W$.) At that point Theorem~C finishes the proof.

\section{The W-trick and the pseudorandom measure}\label{Wtrick}
Now we use the $W$-trick to construct $f_A$ and the pseudorandom measure $\nu_\mathcal{A}$. Our work in this section is inspired by Pintz \cite{pintz1}. However, we define our pseudorandom measure in a way that allows us to use Fourier analysis, which is similar to the approach of Tao and Ziegler \cite{TZ08,TZ14}.

Let $A\subset \mathcal{A}\cap [N']$ and $|A|=\delta|\mathcal{A}\cap [N']|$. Thus, by Theorem B, 
\begin{equation}
|A|\geq \delta \frac{C_1 N'}{\log^k N'}, \label{density}
\end{equation}
for some $C_1>0$. We begin by applying the $W$-trick in order to account for the fact that $\mathcal{A}$ is not equidistributed mod $p$ for any prime $p$. Let $w:=\log\log\log N'$, $W:=\prod_{p\le w}p$, and $N:=\lfloor N'/W\rfloor$. We will choose a residue class $b \pmod W$ with $(b,W)=1$ and $b\not\equiv -h_i \pmod p$ for all $p\mid W$ and $1\le i\le k$, such that $A$ has many elements of the form $Wx+b$ with $x\in[\sqrt N,\,N-\sqrt N]$. Assuming such a choice of $b$ for the moment, define
\begin{equation}\label{f_A}
f_A(x)=c_0\left(\frac{\phi(W)\log N}{W}\right)^k \mathbf 1_A(Wx+b)\mathbf 1_{[\sqrt{N},\,N-\sqrt{N}]}(x),
\end{equation}
where $c_0>0$ is a small constant to be fixed later. We will also show that one may choose $b$ so that
\begin{equation}\label{dense_f_A}
\mathbb E_{x\in[N]} f_A(x)\ge c\delta
\end{equation}
for some $c>0$ to be determined later. 

Let 
\[X_W=\Big\{b\in \Z_W:\ b\not\equiv -h_i \pmod p \ \text{for every }p\mid W \text{ and every }1\le i\le k\Big\}.
\]
and 
\[\Omega_H(p)=\{b\in \Z_p: b\not\equiv -h_i\pmod{p} \text{ for every }1\leq i\leq k.\}\]
By the Chinese remainder theorem, we may count the elements of $X_W$ in the following way (here $L=\max\{h_1,...,h_k\}$)
\begin{align*}
|X_W|&=W\prod_{p<L}\frac{|\Omega_H(p)|}{p}\prod_{L<p\leq w}\left(1-\frac{k}{p}\right)\\
&\leq C_2W\prod_{1\leq p\leq w}\left(1-\frac{1}{p}\right)^k\\
&=C_2W\left(\frac{\phi(W)}{W}\right)^k,
\end{align*}
where $C_2$ is a constant depending only on $L$. By \eqref{density}, $|A|\geq \delta\frac{C_1NW}{\log^k N}.$ Thus, it is not hard to see that
\[\sum_{b\in\Z_W}|\{n\in[\sqrt{N}, N-\sqrt{N}]: nW+b\in A\}|\geq \delta\frac{C_1NW}{2\log^k N}\]
as well. But, if $b\notin X_W$ and $n\in[\sqrt N,\,N-\sqrt N]$, then
$b+h_i\equiv 0\pmod p$ for some $1\le i\le k$ and some prime $p\mid W$
(in particular, $p\le w$). So 
\[P^{-}(Wn+b+h_i)<w<(nW+b)^{\eps_0}\]
since $n\geq \sqrt{N}$ and $w$ is small. Thus, $Wn+b\not\in A$. Hence, 
\[\sum_{b\in \Z_W}|\{n\in[\sqrt{N}, N-\sqrt{N}]: nW+b\in A\}|=\sum_{b\in X_W}|\{n\in[\sqrt{N}, N-\sqrt{N}]: nW+b\in A\}|.\]
It follows by the pigeonhole principle that there exists $b\in X_W$ such that 
\[|\{n\in[\sqrt{N}, N-\sqrt{N}]: nW+b\in A\}|\geq \delta\frac{C_1}{2C_2}\left(\frac{W}{\phi(W)}\right)^k\frac{N}{\log^k N}.\]
This immediately yields \eqref{dense_f_A} with $c=\frac{c_0C_1}{2C_2}$.

Next we define our pseudorandom measure $\nu_{\mathcal{A}}$ as follows:
\begin{equation}\label{nuA}
\nu_{\mathcal{A}}(x)= \left(\frac{\phi(W) \log R}{W}\right)^k\prod_{1\leq i\leq k}\left( \sum_{d_i | Wx+b+h_i} \mu(d_i)\chi\left( \frac{\log d_i}{\log R}\right) \right)^2,
\end{equation}
where $R=N^{\eta_0}$ for some sufficiently small $\eta_0$ to be chosen later and $\chi:\R\to\R$ is a smooth function compactly supported on $[-1,1]$ with $\chi(0)>1\slash 2$ and $\int_0^1 |\chi'(t)|^{2}dt=1$.  For example, one may take $\chi$ defined by 
\[
\chi(t)=
\begin{cases}
De^{\frac{1}{t^2-1}}&\text{ if } |t|<1\\
0&\text{ if } |t|\geq 1,
\end{cases}
\]
where $D$ is chosen so that $\int_0^1 |\chi'(t)|^{2}dt=1$. 

Crucially, we may choose $c_0$ small enough that $f_A$ is bounded pointwise by $\nu_{\mathcal{A}}$: 
\begin{equation}\label{pwbound}
0\leq f_A\leq \nu_{\mathcal{A}}.
\end{equation}
Indeed, if we assume that $f_A(x)$ is positive for some $x\in \N$, then, by the definition of $\mathcal{A}$, we have
\[P^{-} (Wx+b+h_i)\geq x^{\eps_0}\geq N^{\eps_0\slash 2}>R\]
as long as $\eta_0<\eps_0\slash 2$. Thus, if $d_i>1$ is a divisor of $Wx+b+h_i$, then $d_i> R$, so that $\frac{\log d_i}{\log R}>1$ and $\chi(\log d_i/\log R)=0$. Thus, for each $i$, the only divisor $d_i$ of $Wx+b+h_i$ which contributes to the right-hand side of \eqref{nuA} is $d_i=1$. This gives 
\[\nu_{\mathcal{A}}(x)\geq \frac{\eta_0^k}{4^k}\cdot\left(\frac{\phi(W)\log N}{W}\right)^k,\]
so it suffices to take $c_0<\frac{\eta_0^k}{4^k}$. 

\section{The correlation estimate}\label{correlationestimate}
Here we derive a correlation estimate which is the key to proving the polynomial forms condition for $\nu_{\mathcal{A}}$ (Proposition \ref{polyforms}). Our proof is a close adaptation of the proof of corollary 10.5 in \cite{TZ08}. In particular, for fixed shifts $r_1,\dots,r_J$ we evaluate
\[
\mathbb E_{x\in[N]}\prod_{j=1}^J \nu_{\mathcal A}(x+r_j),
\]
obtaining a main term $1+o(1)$ together with a correction coming from
local obstructions. The analogous correlation estimates over the primes are out of reach. Indeed, if the shifts $\{r_1,...,r_J\}$ form an admissible $J$-tuple, then this is related to the Hardy--Littlewood prime tuples conjecture; this is why we need the model $\nu_{\mathcal{A}}$.  Moreover, we have sieve-theoretic techniques which can be used to analyze $\nu_{\mathcal A}$, in the spirit of the method of Goldston and Y{\i}ld{\i}r{\i}m \cite{GY05}.

We now introduce the corresponding local factors and the notion of good/bad/terrible primes needed to state and prove this correlation estimate. 
\begin{defn}[Local factor]\label{local-def}
Let $P_1,\ldots,P_J\in \Z[x_1,\ldots,x_D]$ be polynomials with integer coefficients.  For any prime $p$, we define
the (principal) \emph{local factor}
$$
c_p(P_1,\ldots,P_J) := \mathbb{E}_{x \in \mathbb{F}_p^D} \prod_{j \in [J]} \1_{P_j(x) \equiv 0 \bmod p}.
$$

% We also define the \emph{complementary local factor}
% $$ \overline{c_p}(P_1,\ldots,P_J) := \EE_{x \in \F_p^D} \prod_{j \in [J]} 1_{P_j(x)\not\equiv 0\bmod{p}}.$$
\end{defn}

\begin{defn}[Good prime]\label{bad-def} Let $P_1,\ldots,P_J \in \Z[x_1,\ldots,x_D]$ be a collection of polynomials.  We say that a prime $p$ is \emph{good} with respect to $P_1,\ldots,P_J$ if the following hold:
\begin{itemize}
\item The polynomials $P_1 \bmod p,\ldots,P_J \bmod p$ are pairwise coprime.
\item For each $1 \leq j \leq J$, there exists a co-ordinate $1 \leq i_j \leq D$ for which we have the linear behavior
\begin{align*}
 P_j(x_1,\ldots,x_D) &= P_{j,1}(x_1,\ldots,x_{i_j-1}, x_{i_j+1},\ldots, x_{D}) x_{i_j} \\
 &\quad + P_{j,0}(x_1,\ldots,x_{i_j-1},x_{i_j+1},\ldots,x_{D}) \bmod p 
\end{align*}
where $P_{j,1}, P_{j,0} \in \F_p[x_1,\ldots,x_{i_j-1},x_{i_j+1},\ldots,x_D]$ are such that $P_{j,1}$ is non-zero and coprime to $P_{j,0}$.
\end{itemize}
We say that a prime is \emph{bad} if it is not good.  We say that a prime is \emph{terrible} if at least one of the $P_j$ vanish identically modulo $p$ (i.e. all the coefficients are divisible by $p$).  Note that terrible primes are automatically bad.
\end{defn}
We also need the following estimates which follow from some ``elementary'' algebra in \cite[Appendix D]{TZ08}.
\begin{lem}[Local estimates]\label{local-cor}  Let $P_1,\ldots,P_J \in \Z[x_1,\ldots,x_D]$ have degree at most $d$, let $p$ be a prime, and let $S \subset \{1,\ldots,J\}$.  Then:
\begin{itemize}
\item[(a)] If $|S| = 0$, then $c_p( (P_j)_{j \in S} ) = 1$.
\item[(b)] If $|S| \geq 1$ and $p$ is not terrible, then $c_p( (P_j)_{j \in S} ) = O_{d,D,J}(\frac{1}{p})$.
\item[(c)] If $|S| = 1$ and $p$ is good, then $c_p( (P_j)_{j \in S} ) = \frac{1}{p} + O_{d,D,J}(\frac{1}{p^2})$.
\item[(d)] If $|S| > 1$ and $p$ is good, then $c_p( (P_j)_{j \in S} ) = O_{d,D,J}(\frac{1}{p^2})$.
\end{itemize}
\end{lem}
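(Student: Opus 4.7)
The plan is to derive all four parts from the Schwartz--Zippel lemma combined with the linear structure guaranteed by goodness of $p$. Part (a) is immediate, since an empty product is identically $1$. For (b), since $p$ is not terrible, every $P_j$ with $j \in S$ is a nonzero polynomial modulo $p$ of total degree at most $d$; fixing any single $j$, Schwartz--Zippel bounds its zero set in $\F_p^D$ by $d\,p^{D-1}$, so bounding the product by this one indicator yields $c_p((P_j)_{j \in S}) \leq d/p = O_{d,D,J}(1/p)$.

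For (c), write $S = \{j\}$ and use the goodness decomposition $P_j \equiv P_{j,1}\,x_{i_j} + P_{j,0} \pmod p$, where $P_{j,1}, P_{j,0}$ depend only on the $D-1$ remaining coordinates $x'$. Conditioning on $x'$, the equation $P_j \equiv 0$ has exactly one solution in $x_{i_j}$ when $P_{j,1}(x') \neq 0$, and either $p$ or $0$ solutions when $P_{j,1}(x') = 0$, according to whether $P_{j,0}(x')$ vanishes. Hence
$$
c_p(P_j) \;=\; \frac{1}{p}\,\Pr_{x'}\bigl(P_{j,1}(x') \neq 0\bigr) \;+\; \Pr_{x'}\bigl(P_{j,1}(x') = P_{j,0}(x') = 0\bigr).
$$
Since $P_{j,1}$ is nonzero modulo $p$, Schwartz--Zippel gives the first probability as $1 - O(1/p)$, contributing $1/p + O(1/p^2)$. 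By goodness, $P_{j,1}$ and $P_{j,0}$ are coprime, so a standard resultant/codimension-two estimate bounds the second probability by $O(1/p^2)$.

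For (d), pick two distinct indices $j_1, j_2 \in S$ and use $\prod_{j \in S} \1_{P_j \equiv 0} \leq \1_{P_{j_1} \equiv 0}\,\1_{P_{j_2} \equiv 0}$; it suffices to show that the joint zero set of $P_{j_1}, P_{j_2}$ in $\F_p^D$ has $O_{d,D,J}(p^{D-2})$ points. Write $P_{j_\ell} \equiv A_\ell\,x_{i_{j_\ell}} + B_\ell \pmod p$ via the goodness decomposition. If $i_{j_1} \neq i_{j_2}$, the linear system in $(x_{i_{j_1}}, x_{i_{j_2}})$ is generically invertible, with a codimension-one exceptional locus in the other $D-2$ coordinates. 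If $i_{j_1} = i_{j_2} = i$, common zeros with both $A_\ell(x') \neq 0$ require $A_1 B_2 - A_2 B_1 = 0$; pairwise coprimality of $P_{j_1}, P_{j_2}$ forces this resultant to be a nonzero polynomial in $x'$ (otherwise $A_1 \mid A_2$ and $A_2 \mid A_1$, making $P_{j_1}$ and $P_{j_2}$ proportional), so Schwartz--Zippel caps this locus at $O(p^{D-2})$. Common zeros with some $A_\ell(x') = 0$ also require $B_\ell(x') = 0$, and goodness gives $\gcd(A_\ell, B_\ell) = 1$, so this locus has codimension two in $x'$ and contributes $O(p^{D-3})$ values of $x'$, each with at most $p$ choices of $x_i$.

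I expect the main technical obstacle to be the degenerate case $i_{j_1} = i_{j_2}$ in part (d), where the naive linear dimension count collapses and one has to convert the pairwise coprimality of $P_{j_1}, P_{j_2}$ into a codimension-two estimate via a resultant/Bezout-type argument. The other three parts are essentially bookkeeping around Schwartz--Zippel and the coprimality hypotheses supplied by goodness.
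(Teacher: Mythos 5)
The paper does not give its own proof of this lemma. It is imported verbatim: the authors state that the local estimates ``follow from some `elementary' algebra in [TZ08, Appendix~D]'' and cite them as given. So what you are reconstructing is the Tao--Ziegler argument, not a proof in this paper. With that caveat, your approach for (a), (b) is correct, and your approach for (c) is correct modulo the ``standard resultant/codimension-two estimate'' --- this is the genuine technical ingredient (that two coprime polynomials of bounded degree in $\F_p^{n}$ have $O_{d,n}(p^{n-2})$ common zeros), and it is really the content of TZ08's appendix rather than a throwaway citation. Still, structurally your (a)--(c) match the intended proof.

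There is a real gap in part (d), in the case $i_{j_1}\neq i_{j_2}$. You treat $P_{j_1}=A_1 x_{i_{j_1}}+B_1$, $P_{j_2}=A_2 x_{i_{j_2}}+B_2$ as a ``$2\times 2$ linear system in $(x_{i_{j_1}},x_{i_{j_2}})$'' that is ``generically invertible,'' but the goodness decomposition only makes $P_{j_\ell}$ linear in its own distinguished variable $x_{i_{j_\ell}}$: by definition $P_{j,1},P_{j,0}\in\F_p[x_1,\ldots,x_{i_j-1},x_{i_j+1},\ldots,x_D]$, so for $i_{j_1}\neq i_{j_2}$ the coefficient $A_1$ may depend polynomially on $x_{i_{j_2}}$ and $A_2$ may depend polynomially on $x_{i_{j_1}}$. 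The pair of equations is therefore not an affine linear system in $(x_{i_{j_1}},x_{i_{j_2}})$, and the ``determinant nonzero off a codimension-one locus'' picture does not apply; one would instead have to argue that for all but $O(p^{D-3})$ values of the remaining coordinates $x''$, the restricted bivariate polynomials $P_{j_1}|_{x''},P_{j_2}|_{x''}\in\F_p[x_{i_{j_1}},x_{i_{j_2}}]$ are coprime and then invoke B\'{e}zout, which is a different and less immediate argument. Your $i_{j_1}=i_{j_2}$ subcase is fine, but it is unnecessary overhead: once you are willing to use the coprime codimension-two bound (as you already do for (c) and for the $A_\ell=B_\ell=0$ subcase of (d)), you can apply it directly to $P_{j_1},P_{j_2}\in\F_p[x_1,\ldots,x_D]$ --- they are coprime modulo $p$ by goodness --- and obtain the $O_{d,D,J}(p^{D-2})$ bound in one step, with no case split on $i_{j_1}$ vs.\ $i_{j_2}$ at all.
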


With these definitions and local estimates in hand, we can state and prove the correlation estimate. 
\begin{prop}[Correlation Estimate]\label{correlationestimateprop}Let $r_1,\ldots,r_J$ be integers, and let  $ N \geq R^{4kJ+1}$.  Then
\begin{equation}\label{estimate}
 \EE_{x \in [N]} \prod_{j \in [J]} \nu_{\mathcal{A}}(x+r_j) = 1 + o(1) + O\left( \EXP\left( O\left( \sum_{p \in \PP_b} \frac{1}{p} \right) \right) \right)
\end{equation}
where $\PP_b$ is the set of primes $w< p\leq R^{\log R}$ which are bad with respect to the polynomials $P_{ij}=W(x+r_j)+b+h_i$ for $(i,j)\in [k]\times [J].$ 
\end{prop}
Notice that in our case, since $P_{ij}$ is linear, 
\begin{align*}
\PP_b &= \{w < p \leq R^{\log R}: p |(Wr_j+h_i)-(Wr_{j'}+h_{i'}) \hbox{ for some } (i,j)\neq (i',j')\}.
\end{align*}
Also, in order for the condition $N\geq R^{4kJ+1}$ to hold, we must take $\eta_0\leq (4kJ+1)^{-1}$. 

\begin{proof}
    To simplify notation, set $S=\left( \frac{\phi(W) \log R}{W} \right)^{kJ}.$   Expanding the left-hand side of \eqref{estimate} gives 
    \begin{align*}
        &S\cdot \EE_{x \in [N]} \prod_{i,j}\left( \sum_{m_{ij} | W(x + r_j)+b+h_i} \mu(m_{ij})\chi\left( \frac{\log m_{ij}}{\log R}\right) \right)^2 \notag\\
        &= S\cdot \EE_{x \in [N]} \prod_{i,j} \sum_{[m_{ij}, m_{ij}'] | W(x + r_j)+b+h_i} \mu(m_{ij})\mu(m_{ij}' )\chi\left( \frac{\log m_{ij}}{\log R}\right) \chi\left( \frac{\log m_{ij}'}{\log R}\right)  \notag\\
        \end{align*}
By expressing the condition $[m_{ij}, m_{ij}'] | W(x + r_j)+b+h_i$ with an indicator function in the summand and expanding the product, this becomes
\begin{align*}
&S\cdot \EE_{x \in [N]} \prod_{i,j} \sum_{\substack{{m_{ij}, m_{ij}' \geq 1} \\ {(i, j) \in [k]\times [J]}}} \mu(m_{ij})\mu(m_{ij}' )\chi\left( \frac{\log m_{ij}}{\log R}\right) \chi\left( \frac{\log m_{ij}'}{\log R}\right)  \\
&\;\;\;\;\;\;\;\;\;\;\;\;\;\;\;\;\;\;\;\;\;\;\;\;\;\;\;\;\;\;\;\;\;\;\;\;\;\;\;\;\;\;\;\;\;\;\;\;\;\;\;\;\;\;\;\;\;\;\;\;\;\;\;\;\;\;\;\;\;\;\;\;\;\;\;\;\;\;\times\1_{[m_{ij}, m_{ij}']| W(x+r_j) + b + h_i}.\\
&= S\cdot \EE_{x \in [N]}  \sum_{\substack{{m_{ij}, m_{ij}' \geq 1} \\ {(i, j) \in [k]\times [J]}}}\prod_{i,j}\mu(m_{ij})\mu(m_{ij}' )\chi\left( \frac{\log m_{ij}}{\log R}\right) \chi\left( \frac{\log m_{ij}'}{\log R}\right) \\
&\;\;\;\;\;\;\;\;\;\;\;\;\;\;\;\;\;\;\;\;\;\;\;\;\;\;\;\;\;\;\;\;\;\;\;\;\;\;\;\;\;\;\;\;\;\;\;\;\;\;\;\;\;\;\;\;\;\;\;\;\;\;\;\;\;\;\;\;\;\;\;\;\;\;\;\;\;\;\times\prod_{i,j}\1_{[m_{ij}, m_{ij}']| W(x+r_j) + b + h_i},\\
\end{align*}
where we also used the fact that, by the support properties of $\mu$ and $\chi$, we may restrict the sum to square free $m_{ij},m'_{ij}$ with $m_{ij},m'_{ij}<R$ for all $i$ and $j$, so that the sums and products above are finite. Finally, by utilizing linearity of the average over the $x$ variable, this is 
\begin{align}
& S\cdot \sum_{\substack{{m_{ij}, m_{ij}' \geq 1} \\ {(i, j) \in [k]\times [J]}}} \prod_{i,j}\mu(m_{ij})\mu(m_{ij}') \chi\left( \frac{\log m_{ij}}{\log R}\right) \chi\left( \frac{\log m_{ij}'}{\log R} \right) \notag\\
&\;\;\;\;\;\;\;\;\;\;\;\;\;\;\;\;\;\;\;\;\;\;\;\;\;\;\;\;\;\;\;\;\;\;\;\;\;\;\;\;\;\;\;\;\;\;\;\;\;\;\times\EE_{x \in [N]} \prod_{i,j} \1_{[m_{ij}, m_{ij}']| W(x+r_j) + b + h_i}.\label{expandedstep}
\end{align}
 By \cite{TZ08} corollary C.3, \begin{align*}
    &\EE_{x \in [N]} \prod_{i,j} \1_{[m_{ij}, m_{ij}']| W(x+r_j) + b + h_i} \\
    &= \left( 1 + O \left( \frac{M}{N} \right)\right) \EE_{y \in \Z_{M}} \prod_{i,j} \1_{[m_{ij}, m_{ij}']| W(y+r_j) + b + h_i}
\end{align*}
where $\displaystyle{M}$ is the least common multiple of $[m_{ij}, m_{ij}']$ for all $(i,j)\in [k]\times [J]$. In particular, $M \leq R^{2kJ}$, so 
\begin{align*}
    &\EE_{x \in [N]} \prod_{i,j} \1_{[m_{ij}, m_{ij}']| W(x+r_j) + b + h_i} \\
    &= \left( 1 + O \left( \frac{1}{R^{2kJ + 1}} \right)\right) \EE_{y \in \Z_{M}} \prod_{i,j} \1_{[m_{ij}, m_{ij}']| W(y+r_j) + b + h_i}.
\end{align*}
To get rid of the $\displaystyle{ O \left( \frac{1}{R^{2kJ + 1}} \right)}$, notice that $\displaystyle{\EE_{y \in \Z_{M}} \prod_{i,j} \1_{[m_{ij}, m_{ij}']| W(y+r_j) + b + h_i}}$ can be crudely bounded above by $1$, so the contribution of $\displaystyle{ O \left( \frac{1}{R^{2kJ + 1}} \right)}$ to \eqref{expandedstep} is \begin{align*}
    O\left( S\cdot \sum_{\substack{{1 \leq m_{ij}, m_{ij}' \leq R} \\ {(i, j) \in [k]\times [J]}}} \frac{1}{R^{2kJ+1}} \right) &= O \left( \frac{\log^{kJ}R}{R} \right) = o(1).
\end{align*}

The local factor $\displaystyle{\alpha_{lcm([m_{ij}, m_{ij}']),(i,j) \in [k]\times [J])} = \EE_{y \in \Z_{M}} \prod_{i,j} \1_{[m_{ij}, m_{ij}']| W(y+r_j) + b + h_i} }$ is multiplicative by the Chinese Remainder Theorem so that if $\displaystyle{[m_{ij}, m_{ij}'] = \prod_{p}p^{r_{ij}(p)}}$, for $\displaystyle{(i,j) \in [k]\times [J])}$, then $\alpha_{lcm([m_{ij}, m_{ij}'])} = \prod_{p} \alpha_{(p^{r_{ij}})_{ij}}$. Here, the $r_{ij}$'s depend on $p$ and are either $0$ or $1$ when $m_{ij}, m_{ij}'$ are squarefree.
Now \begin{align*}
    \alpha_{(p^{r_{ij}})_{ij}} &= \EE_{y \in \Z_p} \prod_{i,j} \1_{p^{r_{ij}} | W(y + r_j)+ b + h_{i}} = c_p\left( \left( W\left(y + r_j\right) + b + h_i \right)_{r_{ij} = 1} \right),
\end{align*}
where $\displaystyle{c_p(P_1,...,P_L) = \EE_{x \in \F_p} \prod_{l \in [L]} \1_{P_l(x) \equiv 0 (p)}}$  and $\left( W\left(y + r_j\right) + b + h_i \right)_{r_{ij} = 1}$ is a tuple of polynomials consisting of $W\left(y + r_j\right) + b + h_i$ for each pair $(i,j)$ such that $r_{ij}=1$. (If no such $(i,j)$ exist, this tuple is empty and $c_p(\emptyset)=1$.)
Therefore \begin{align*}
    \alpha_{lcm([m_{ij}, m_{ij}'])} = \prod_{p} c_p\left( \left( W\left(y + r_j\right) + b + h_i \right)_{r_{ij} = 1} \right).
\end{align*}

We now replace the $\chi$ factors with terms that are multiplicative in the $m_{ij}, m_{ij}'$. We have \begin{equation}\label{phidef}
    e^x \chi(x) = \int_{-\infty}^{\infty} \varphi(\xi) e^{-ix \xi} d\xi,
\end{equation} 
as $\chi$ is smooth and compactly supported, for some smooth and rapidly decreasing $\varphi$ (in particular $\varphi(\xi) = O_{A}((1 + |\xi|)^{-A})$ for any $A>0$). So from the hypothesis on $\chi$ we have \begin{equation}
    \int_{-\infty}^\infty \int_{-\infty}^\infty \frac{(1+it) (1+it')}{2+it+it'} \varphi(t) \varphi(t')\ dt dt' = 1.
\end{equation}

From the above Fourier expansion we have \begin{align*}
    \chi \left( \frac{\log m_{ij}}{\log R} \right) &= \int_{- \infty}^{\infty} m_{ij}^{-z_{ij}} \varphi(\xi_{ij}) d\xi_{ij} \\
    \chi \left( \frac{\log m_{ij}'}{\log R} \right) &= \int_{- \infty}^{\infty} (m_{ij}')^{-z_{ij}'} \varphi(\xi_{ij}') d\xi_{ij}'.
\end{align*}
where \begin{equation}\label{zdef}
    z_{ij} := \frac{1 + i\xi_{ij}}{\log R}, \hspace{2mm} z_{ij}' := \frac{1 + i\xi_{ij}'}{\log R}.
\end{equation}

With these improvements, the expanded step is now reduced to \begin{align*}
    &S\cdot \sum_{\substack{{m_{ij}, m_{ij}' \geq 1} \\ {(i, j) \in [k]\times [J]}}} \int_{-\infty}^{\infty} \ldots \int_{-\infty}^{\infty} \prod_{i,j} \mu(m_{ij})\mu(m_{ij}') m_{ij}^{-z_{ij}} (m_{ij}')^{-z_{ij}'} \varphi(\xi_{ij}) \varphi(\xi_{ij}')d\xi_{ij}d\xi_{ij}' \\
    &\;\;\;\;\;\;\;\;\;\;\;\;\;\;\;\;\;\;\;\;\;\;\;\;\;\;\;\;\;\;\;\;\;\;\;\;\;\;\;\;\;\;\;\;\;\;\;\;\;\;\times\prod_{p \leq R^{\log R}} c_p\left( \left( W\left(y + r_j\right) + b + h_i \right)_{r_{ij} = 1} \right) + o(1).
\end{align*}
The main term is then reduced to \begin{align*}
    \int_{-\infty}^{\infty} \ldots \int_{-\infty}^{\infty} S\cdot\prod_{p \leq R^{\log R}} E_{p} \prod_{i, j} \varphi(\xi_{ij}) \varphi(\xi_{ij}')d\xi_{ij}d\xi_{ij}'
\end{align*}
where $E_p = E_p(z_{11},z_{11}',...,z_{kJ},z_{kJ}')$ is the Euler factor \begin{align*}
    E_p := \sum_{\substack{{m_{ij}, m_{ij}' \in \{ 1, p\}} \\ \forall (i, j) \in [k] \times [J]}} \prod_{i, j} \mu(m_{ij})\mu(m_{ij}') m_{ij}^{-z_{ij}} (m_{ij}')^{-z_{ij}'}c_p\left( \left( W\left(y + r_j\right) + b + h_i \right)_{r_{ij} = 1} \right).
\end{align*}
To approximate $E_p$, we introduce $E_p'$ \begin{align*}
    E_p' := \prod_{i, j} \frac{\left( 1 - \frac{1}{p^{1 + z_{ij}}} \right)\left( 1 - \frac{1}{p^{1 + z_{ij}'}} \right)}{\left( 1 - \frac{1}{p^{1 + z_{ij} + z_{ij}'}} \right)}.
\end{align*}

\begin{lem}[Euler product estimate]\label{EPestimate} We have \begin{align*}
    \prod_{p \leq R^{\log R}} \frac{E_p}{E_p'} = \left( 1 + o(1) + O \left( \EXP \left( O \left( \sum_{p \in \PP_b} \frac{1}{p} \right) \right) \right) \right) \left( \frac{W}{\phi(W)} \right)^{kJ}.
\end{align*}
    
\end{lem}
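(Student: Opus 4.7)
The plan is to partition the primes $p \leq R^{\log R}$ into three ranges—$p \leq w$, good primes in $(w, R^{\log R}]$, and bad primes in the same interval—and estimate $E_p/E_p'$ on each range separately. Since our polynomials $W(x+r_j)+b+h_i$ are linear with leading coefficient $W$, no prime $p > w$ can be terrible, so all primes in the second and third ranges are covered by \cref{local-cor}.

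For $p \leq w$ we have $p \mid W$, and $b \in X_W$ forces $W(y+r_j)+b+h_i \equiv b+h_i \not\equiv 0 \pmod p$ for every $(i,j)$. Consequently $c_p$ vanishes on every nonempty index subset, only the $m_{ij}=m_{ij}'=1$ term of $E_p$ survives, and $E_p = 1$. A direct expansion of $E_p'$ using $|p^{-z_{ij}}|,|p^{-z_{ij}'}| = p^{-1/\log R} = 1 + O(\log p/\log R)$ for $p \leq w = \log\log\log N$ gives $E_p' = (1 - 1/p)^{kJ}\bigl(1 + O(\log p/\log R)\bigr)$. Multiplying over $p \leq w$ yields $\prod_{p \leq w} E_p/E_p' = (W/\phi(W))^{kJ}\bigl(1 + o(1)\bigr)$.

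For good primes $w < p \leq R^{\log R}$, I would use \cref{local-cor}(c)–(d) to show that $E_p$ factorizes over the indices $(i,j)$ up to a small remainder: coprimality of the $P_{ij} \bmod p$ from \cref{bad-def} guarantees that singleton subsets contribute $1/p + O(1/p^2)$ while subsets of size $\geq 2$ contribute only $O(1/p^2)$, so
\[
E_p = \prod_{i,j}\left(1 - \frac{p^{-z_{ij}} + p^{-z_{ij}'} - p^{-z_{ij}-z_{ij}'}}{p}\right) + O(1/p^2).
\]
An analogous Taylor expansion of $E_p'$ produces the same leading form, giving $E_p/E_p' = 1 + O(1/p^2)$ uniformly in the frequencies $\xi_{ij}, \xi_{ij}'$, whence summability of $1/p^2$ yields $\prod_{p \text{ good}} E_p/E_p' = 1 + o(1)$. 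The main obstacle lies precisely in this bookkeeping: verifying that the $4^{kJ}$-term subset sum in $E_p$ really does factor over $(i,j)$ modulo $O(1/p^2)$, that the resulting product agrees with $E_p'$ to the same precision, and that all implied constants are uniform in $\xi_{ij},\xi_{ij}'$, so that we may later integrate against the rapidly-decreasing weight $\varphi$ without loss.

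For bad primes in $(w, R^{\log R}]$, \cref{local-cor}(b) gives $c_p = O(1/p)$ on every nonempty subset, so the trivial bounds $|\mu|, |\chi| \leq 1$ and $|p^{-z}| = p^{-1/\log R} \leq 1$ force $E_p = 1 + O(1/p)$, while Taylor expansion gives $E_p' = 1 + O(1/p)$, so $E_p/E_p' = 1 + O(1/p)$. Multiplying over $p \in \PP_b$ and using $1+x \leq e^x$ gives $\prod_{p \in \PP_b} E_p/E_p' = 1 + O\bigl(\EXP\bigl(O\bigl(\sum_{p \in \PP_b} 1/p\bigr)\bigr)\bigr)$. Combining the three ranges produces the claimed asymptotic.
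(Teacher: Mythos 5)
Your proof is correct and follows essentially the same three-way partition (primes $p\leq w$, good primes $w<p\leq R^{\log R}$, bad primes $w<p\leq R^{\log R}$) and the same local-factor estimates as the paper; the only difference is that you spell out the algebraic bookkeeping for the good- and bad-prime Euler factors, where the paper simply cites Lemma~9.5 of \cite{TZ08}. One small point worth tightening: on the bad primes, $1+x\leq e^x$ gives only the upper bound on $\prod_{p\in\PP_b} E_p/E_p'$, so to get the two-sided estimate $1+O(\EXP(O(\sum 1/p)))$ you should, as the paper does, write $E_p/E_p'=\exp(O(1/p))$ directly (valid since $E_p/E_p'=1+O(1/p)$ is bounded away from zero for $p>w$) and then use $|e^x-1|\leq e^{|x|}-1$.
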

\begin{proof}
    %% proof of the Euler product estimate lemma
    We first prove that if $p < w$, then
    \begin{equation}\label{p<w}
        E_p = 1 , \hspace{1cm} E_p' = \left( 1 - \frac{1}{p} \right)^{kJ} + o(1).
    \end{equation}
To see why $E_p=1$ for $p<w$, recall that we chose $b$ so that $b\not\equiv -h_i\pmod{W}$ for any $1\leq i\leq k$. Thus, since $p|W$ for any prime $p<w$, we have 
\[ W\left(y + r_j\right) + b + h_i\not\equiv0\pmod{p}\]
so that if $r_{ij}=1$ for some $i,j$, then 
\[c_p\left( \left( W\left(y + r_j\right) + b + h_i \right)_{r_{ij} = 1} \right)=0.\]
It follows that the only time the summand in the definition of $E_p$ is nonzero is when $r_{ij}=0$ for all $i$ and $j$. This is exactly when $m_{ij}=1$ and $m_{ij}'=1$ for all $i$ and $j$, in which case  $\left( W\left(y + r_j\right) + b + h_i \right)_{r_{ij} = 1}$ is empty so that 
\[c_p\left( \left( W\left(y + r_j\right) + b + h_i \right)_{r_{ij} = 1} \right)=1\] 
and
\begin{align*}
\mu(m_{ij})\mu(m_{ij}') m_{ij}^{-z_{ij}} (m_{ij}')^{-z_{ij}'}c_p\left( \left( W\left(y + r_j\right) + b + h_i \right)_{r_{ij} = 1} \right)
&=1
\end{align*}
for all $i$ and $j$ and thus $E_p=\prod_{i,j}1=1.$ To see the why $E_p'=\left( 1 - \frac{1}{p} \right)^{kJ} + o(1)$ for $p<w$, recall from the definitions of $z_{ij}$ and $z_{ij}'$ \eqref{zdef} that $p^{-z_{ij}}, p^{-z_{ij}'}=1+o(1)$ for fixed $\xi_{ij}, \xi{ij}'$. Here, we also used the fact that $p<w$ and $w=\log\log\log N'$ is much smaller than $R=N^{\eta_0}$. 
Thus, for each $i$ and $j$, we have
\[
    \frac{\left( 1 - \frac{1}{p^{1 + z_{ij}}} \right)\left( 1 - \frac{1}{p^{1 + z_{ij}'}} \right)}{\left( 1 - \frac{1}{p^{1 + z_{ij} + z_{ij}'}}\right) }=1-\frac{1}{p}+o(1).
\]
and therefore 
\[E_p'=\prod_{i,j}\left(1-\frac{1}{p}+o(1)\right)=\left( 1 - \frac{1}{p} \right)^{kJ} + o(1)\]

With these two equations \eqref{p<w} in hand,
\begin{align*}
    \prod_{p < w}\frac{E_p}{E_p'} &= (1 + o(1)) \prod_{p<w} \left( 1 - \frac{1}{p} \right)^{-kJ} = (1 + o(1)) \left( \frac{W}{\phi(W)} \right)^{kJ}.
\end{align*}
To estimate $\displaystyle{\prod_{w \leq p \leq R^{\log R}} \frac{E_p}{E_p'}}$, we divide primes in $w \leq p \leq R^{\log R}$ into the good, bad, and terrible primes with respect to the polynomials $P_{ij}$. The bad primes $w \leq p \leq R^{\log R}$ are in $\PP_b$, and we clearly have no terrible primes $p\geq w$, since in this case $p$ does not divide $W$. 

    If $p$ is bad, $\displaystyle{c_p((W(y + r_j) + b + h_i)_{r_{ij} = 1}) = O\left( \frac{1}{p}\right)}$ from Lemma 9.5 of \cite{TZ08}. So, \begin{equation*}
        E_p = 1 + O \left( \frac{1}{p} \right) = \exp \left( O\left(\frac{1}{p}\right)\right)E_p'
    \end{equation*} and \begin{align*}
        \prod_{\substack{{w \leq p \leq R^{\log R}}\\ { p-\text{bad}}}} \frac{E_p}{E_p'} = \prod_{\substack{{w \leq p \leq R^{\log R}}\\ { p-\text{bad}}}}\exp \left( O\left(\frac{1}{p}\right)\right) &=\exp\left( \sum_{\substack{{w \leq p \leq R^{\log R}}\\ { p-\text{bad}}}} O \left( \frac{1}{p} \right) \right) \\
        &= 1 + O \left( \EXP \left( O \left( \sum_{p \in \PP_b} \frac{1}{p} \right) \right) \right).
    \end{align*}
    
    For good $p$, again using Lemma 9.5 of \cite{TZ08}, we have 
        $E_p = \left( 1 + O \left( \frac{1}{p^2} \right) \right)E_p',$ and \begin{align*}
        \prod_{\substack{{w \leq p \leq R^{\log R}}\\ { p-\text{good}}}} \frac{E_p}{E_p'} &= \prod_{\substack{{w \leq p \leq R^{\log R}}\\ { p-\text{good}}}} \left( 1 + O \left( \frac{1}{p^2} \right) \right) = 1 + o(1).
    \end{align*}
    as $w \rightarrow \infty$. Putting all the cases of $p$ together, we have the result.
\end{proof}

Now we complete the proof of Proposition \ref{correlationestimateprop}. Using some standard facts about the Riemann Zeta function (specifically Theorems 11.7 and 12.5 from Apostol \cite{Apostol}) we can write \begin{align*}
    \prod_{p \leq R^{\log R}} E_p' = \prod_{i, j} \left( 1 + o(1) \right) \frac{\zeta(1 + z_{ij} + z_{ij}')}{\zeta(1 + z_{ij})\zeta(1 + z_{ij}')},
\end{align*}
and we also have \begin{align*}
    \frac{1}{\zeta(1+(1+i\xi)/\log R)} &= (1 + o( (1 + |\xi|)^2 ) ) \frac{1+i\xi}{\log R}, \\
    \zeta(1+(1+i\xi)/\log R) &= (1 + o( (1 + |\xi|)^2 ) ) \frac{\log R}{1+i\xi}
\end{align*}
for any real $\xi$, so we get \begin{align*}
    \prod_{p \leq R^{\log R}} E'_p =
\prod_{i, j}(1 + o( (1 + |\xi_{ij}| + |\xi'_{ij}|)^{6} ) )
\frac{1}{\log R} \frac{(1 + i \xi_{ij}) (1 + i \xi'_{ij})}{2 + i \xi_{ij} + i \xi'_{ij}}.
\end{align*}

Applying Lemma \ref{EPestimate} gives 
\begin{align*}
    &S\cdot\prod_{p \leq R^{\log R}} E_{p} = \left( 1 + o\left( \prod_{i,j} (1 + |\xi_{ij}| + |\xi'_{ij}|)^{6}   \right) + O \left( \EXP \left( O \left( \sum_{p \in \PP_b} \frac{1}{p} \right) \right) \right) \right)\\
    &\;\;\;\;\;\;\;\;\;\;\;\;\;\;\;\;\;\;\;\;\;\;\;\;\;\;\;\;\;\;\;\times\prod_{i,j} \frac{(1 + i \xi_{ij}) (1 + i \xi'_{ij})}{2 + i \xi_{ij} + i \xi'_{ij}}.
\end{align*}
Thus, to prove  Proposition \ref{correlationestimateprop}, it is enough to show that \begin{align*}
     \int_{-\infty}^{\infty} \dots \int_{-\infty}^{\infty} \prod_{i, j} \varphi(\xi_{ij}) \varphi(\xi'_{ij}) \frac{(1 + i \xi_{ij}) (1 + i \xi'_{ij})}{2 + i \xi_{ij} + i \xi'_{ij}}\ d\xi_{ij} d\xi'_{ij} = 1.
\end{align*} 
and \begin{align*}
    \int_{-\infty}^{\infty} \dots \int_{-\infty}^{\infty} \prod_{i, j} |\varphi(\xi_{ij})| |\varphi(\xi'_{ij})| (1 + |\xi_{ij}| + |\xi'_{ij}|)^6 \frac{|1 + i \xi_{ij}| |1 + i \xi'_{ij}|}{|2 + i \xi_{ij} + i \xi'_{ij}|}\ d\xi_{ij} d\xi'_{ij} = O(1).
\end{align*}
We get the first equation from \eqref{phidef} and the fact that $\int_0^1 |\chi'(t)|^{2}dt=1$, and the second equation follows from the rapid decay of $\varphi$.
\end{proof}

\section{The polynomial forms condition}\label{polynomialforms}
Now that we have the correlation estimate, we can establish the fundamental pseudorandomness property for $\nu_{\mathcal{A}}$, the polynomial forms condition as stated in \cite{Lott}. This shows $\nu_{\mathcal A}$ behaves like the constant function $1$ for multilinear polynomial averages. In \cite{Lott}, this is used to effectively carry out PET induction to prove the transference theorem. 

This particular formulation allows us to circumvent the convex geometry in \cite{TZ08, TZ14}. However, the ideas in the proof are still essentially the same as in \cite{TZ08, TZ14}.
\begin{prop}[Polynomial forms condition]\label{polyforms}  Let $d,D,J$ be fixed natural numbers, let $\eps_0$ sufficiently small and $L$ sufficiently large depending on $d,D,J$. Let $Q_1,...,Q_J\in \Z[m_1,...,m_d]$ be polynomials of degree at most $D$, with coefficients of size $O(W^C)$. Assume $Q_i-Q_j$ is nonconstant for all $i\neq j$. Then
\eq\label{2.8}
\EE_{\vec \ell \in [H]^d}\ \EE_{x\in [N]} \prod_{j=1}^J \nu_{\mathcal{A}}(x+Q_j(\vec \ell)) 
= 1+o(1),
\ee
where $H=\log^{\sqrt{L}}N$. 
\end{prop}
\begin{proof}
By the correlation estimate, for each fixed $\vec \ell$ we have
\[\EE_{x\in [N]} \prod_{j = 1}^J \nu_{\mathcal{A}}(x+Q_j(\vec \ell))= 1 + o(1) + O\left( \EXP\left( O\left( \sum_{p \in \PP_b(\vec\ell)} \frac{1}{p} \right) \right) \right).\]
where 
\begin{align*}
&\PP_b(\vec\ell) \\
&= \{w < p \leq R^{\log R} : p \mid (WQ_j(\vec\ell) + h_i) - (WQ_{j'}(\vec\ell) + h_{i'}) \text{ for some } (i,j) \neq (i',j')\} \\
&=\bigcup_{1\leq i,i'\leq k} \{w < p \leq R^{\log R} : p \mid (WQ_j(\vec\ell) + h_i) - (WQ_{j'}(\vec\ell) + h_{i'}) \text{ for some } j \neq j'\},
\end{align*}
where we made the observation that if $j = j'$, then $p \mid (WQ_j(\vec\ell) + h_i) - (WQ_{j'}(\vec\ell) + h_{i'})$ forces $p \mid h_i - h_{i'}$, which in turn forces $i = i'$, since $p > w > \max |h_i - h_{i'}|$ for large enough $N$. Thus, it suffices to show 
\[\EE_{\vec \ell\in [H]^d} O\left( \EXP\left( O\left( \sum_{p \in \PP_b(\vec\ell)} \frac{1}{p} \right) \right) \right)=o(1).\]
 Let 
\[S(i,j,i',j', \vec \ell):=\sum_{\substack{w < p \leq R^{\log R} \\ p \mid (WQ_j(\vec\ell) + h_i) - (WQ_{j'}(\vec\ell) + h_{i'})}} \frac{1}{p}.\]
Notice that 
\begin{align*}
\sum_{p \in \PP_b(\vec\ell)} \frac{1}{p} 
=\sum_{(i,j) \neq (i',j')} S(i,j,i',j',\vec\ell) \leq\sum_{1\leq i,i'\leq k} \sum_{1 \leq j < j' \leq J} S(i,j,i',j',\vec\ell) .
\end{align*}
Thus, 
\begin{align*}
&\EE_{\vec \ell\in [H]^d} 
O\left( 
\EXP\left( 
O\left( 
\sum_{p \in \PP_b(\vec\ell)} \frac{1}{p} 
\right) 
\right) 
\right) \\
&= \EE_{\vec \ell\in [H]^d} 
O\left( 
\EXP\left( 
O\left( 
\sum_{1 \leq i, i' \leq k} 
\sum_{1 \leq j < j' \leq J} 
S(i,j,i',j',\vec\ell)
\right) 
\right) 
\right) \\
&\ll \sum_{1 \leq i, i' \leq k} 
\sum_{1 \leq j < j' \leq J} 
\EE_{\vec \ell\in [H]^d} 
O\left( 
\EXP\left( 
O(S(i,j,i',j', \vec\ell)
\right) 
\right),
\end{align*}
where we repeatedly applied the bound $\EXP(a+b)\ll \EXP(2a)+\EXP(2b)$. Thus, it suffices to show 

\begin{equation}
\EE_{\vec \ell\in [H]^d} 
\EXP\left( 
O\left( S(i,j,i',j',\vec\ell)
\right) 
\right) 
=
o(1), \label{NastySum}
\end{equation}
for each fixed $1\leq i,i'\leq k$ and $1\leq j<j'\leq J$.

To estimate the \eqref{NastySum} first consider set $B$ of the primes $p\geq w$ for which the polynomial $\De Q(\vec\ell):=(WQ_j(\vec\ell) + h_i) - (WQ_{j'}(\vec\ell) + h_{i'})$ identically vanishes$\pmod{p}$. Since all such primes $p$ must divide the coefficients of $\De Q$ which are of size $O(W^C)$ we have
\[w^{|B|}\leq \prod_{p\in B} p \ll W^C \ll e^{C\,w},\]
by the prime number theorem, thus 
\[\sum_{p\in B}\ \frac{1}{p}\, \leq\, \frac{|B|}{w}\, \ll \frac{1}{\log\,w}=o(1),\]
so these primes have negligible contribution to \eqref{NastySum}.

By \cite[Lemma E.1]{TZ08}, we have the following for any set $E$ of primes: 
\[\EXP\left(O\left(\sum_{p\in E}\frac{1}{p}\right)\right)\ll \sum_{p\in E} \frac{\log^{O(1)}p}{p}.\]
Thus, it suffices to prove 
\begin{equation}\label{TidySum}
\EE_{\vec\ell\in [H]^d} \sum_{\substack{w < p \leq R^{\log R}, p\not\in B, p \mid \De Q(\vec\ell)}} \frac{\log^{O(1)}p}{p}=o(1).
\end{equation}

Next, we partition the sum in $p\notin B$ into dyadic intervals $2^s\leq p<2^{s+1}$, and estimate the contribution of each part depending the size of the parameter $s\in \N$.

Assume first that $2^{s/2}\leq \log\,H = \sqrt{L}\,\log\log\,N$. Since the polynomial $\De Q$ has degree at least 1 $\pmod{p}$, and $p$ is much smaller than $H$ when $p<2^{s+1}$, the number of $\vec \ell\in [H]^d$ satisfying $p|\De Q(\vec \ell)$ is at most $H^d/p+O(1)$, thus 
\eq
\EE_{\vec \ell\in [H]^d} \sum_{2^s\leq p < 2^{s+1}\,p\not\in B,\, p|\De Q(\vec \ell)}\ 
\frac{\log^{O(1)}p}{p}\ \leq\ \frac{s^{O(1)}}{2^s} + O(H^{-1\slash 2}). \label{bound1}
\ee

Assume now $2^{s/2} \geq \log H$. Then left side of \eqref{bound1} is estimated by
\begin{align}
\frac{s^{O(1)}}{2^s}&\ \EE_{\vec\ell\in [H]^d}\,|\{2^s\leq p <2^{s+1};
\ p|\De Q(\vec \ell), p\not\in B\}|\notag\\
 &\leq \frac{s^{O(1)}D\log\,H}{\,2^s}+s^{O(1)}O(H^{-1})\notag\\
&\ll \frac{s^{O(1)}}{2^{s/2}}+O(H^{-1\slash 2}). \label{bound2}
\end{align}
 Here we used the well-known fact that $|\{\vec\ell\in [H]^d, \De Q(\vec\ell)=0\}|=O(H^{d-1})$, and if $1\leq |\De Q(\vec \ell)| \leq H^D$, then 
$\ \prod_{p|\De Q(\vec\ell)} p\leq H^D\ $, so the number of primes $p\geq 2^s$, $p|\De Q(\vec \ell)$ is $O(D\log H/s)$.

Therefore, by \eqref{bound1} and \eqref{bound2}, we can sum \eqref{TidySum} over dyadic intervals to conclude that it is $O(w^{-1\slash4})=o(1)$. This completes the proof. 

\end{proof}

\section{The transference principle}\label{transferenceprinciple}

Now we record the formal deduction of Theorem~1 from the transference principle, but first we must state the transference principle, which is a combination of corollaries $3.1$ and $4.2$ in Lott-Magyar-Petridis-Pintz \cite{Lott}. 

\begin{thmD}\label{cor4.2} Let $f:[N]\to\R$ satisfy $0\leq f\leq\nu,$ where $\nu$ satisfies the polynomial forms condition, and let $\eps>0$. Fix polynomials $P_1,...,P_t\in\Z[y]$ with $P_j(0)=0$ for every $1\leq j\leq t$. Then there exists a function $0\leq g\leq 1$ such that
\[
\left|\Lambda_{P_1,....,P_t}f-\Lambda_{P_1,....,P_t}g\right|< \epsilon\hbox{ and }\left|\EE_{x\in[N]}f(x)-\EE_{x\in[N]}g(x)\right|<\epsilon.\]
\end{thmD}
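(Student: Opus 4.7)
The plan is to follow the Gowers / Conlon--Fox--Zhao transference strategy adapted to polynomial configurations, as developed in \cite{Lott}. The core principle is that, given the polynomial forms condition (Proposition 5.1), any $f$ majorized by $\nu_{\mathcal{A}}$ should be indistinguishable, with respect to the counting operator $\Lambda_{P_1,\ldots,P_t}$, from its bounded "structured component" $g$. First I would isolate a suitable class of dual/anti-uniform functions $\mathcal{D}$ attached to $\Lambda_{P_1,\ldots,P_t}$: for each choice of $t-1$ factors and each $(x,y)$-like shift pattern, the corresponding dual function $\mathcal{D}g(x)$ arises by averaging products of shifts of $g$ along $P_1,\ldots,P_t$. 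The goal is to produce $g$ so that $f-g$ is orthogonal to $\mathcal{D}$ in an appropriate sense.

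Next, I would prove a \emph{polynomial generalized von Neumann theorem}: if $f_1,\ldots,f_t$ are pointwise bounded by $\nu_{\mathcal{A}}+1$ and some $f_j$ has small dual norm $\|f_j\|_{\mathcal{D}}^\ast$, then $\EE_{y\in[M]}\EE_{x\in[N]}\prod_j f_j(x+P_j(y))$ is small. This is where PET induction combined with Cauchy--Schwarz is applied: each differencing step replaces a polynomial factor with an auxiliary polynomial configuration whose counting average against $\nu_{\mathcal{A}}$ is controlled precisely by the polynomial forms condition. This step is the main technical obstacle, since the PET scheme for $t$ polynomials produces a large but bounded family of auxiliary configurations, each of which must fit into the hypothesis of Proposition~5.1 with the right parameters $d,D,J$.

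Having such a von Neumann estimate, I would then construct $g$ via the polynomial approximation / geometric Hahn--Banach argument of Gowers \cite{Gow10}, or equivalently a minimax / densification argument \cite{CFZ15}: consider the convex set of functions $h$ on $[N]$ with $0\le h\le 1$ and $|\EE h - \EE f|<\eps$, and the convex set of functions of the form $f - \eps\mathcal{D}\phi$ for $\phi$ suitable; polynomial approximation of the indicator of $(-\infty,1]$ reduces the separation property to finitely many dual-norm inequalities, each of which can be verified using the polynomial forms condition. Hahn--Banach then yields $g$ with $\|f-g\|_{\mathcal{D}}^\ast<\eps'$ while maintaining $0\le g\le 1$ and $|\EE f-\EE g|<\eps$.

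Finally, to transfer this to the counting operator, I would write
\[
\Lambda_{P_1,\ldots,P_t}f-\Lambda_{P_1,\ldots,P_t}g=\sum_{j=1}^{t}\EE_{y,x}(f-g)(x+P_j(y))\prod_{i<j}f(x+P_i(y))\prod_{i>j}g(x+P_i(y)),
\]
and apply the polynomial generalized von Neumann theorem to each telescoping term, using that $|f|, |g|\le \nu_{\mathcal{A}}+1$ and $\|f-g\|_{\mathcal{D}}^\ast<\eps'$; choosing $\eps'$ sufficiently small in terms of $\eps$ and $t$ then yields the desired $\eps$-bound. The hardest piece is undoubtedly the polynomial von Neumann step, since packaging the PET-generated auxiliary polynomials into the exact form demanded by Proposition~5.1 requires careful bookkeeping of degrees, coefficients, and the $W^C$ bound -- this is the content of the countless polynomial-forms invocations attributed to \cite{Lott}.
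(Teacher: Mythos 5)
The paper does not actually prove Theorem D: it simply cites Corollaries 3.1 and 4.2 of \cite{Lott} and remarks that their proofs "require countless applications of the polynomial forms condition." Your sketch correctly reconstructs what those two cited corollaries deliver and in what order they are combined: a polynomial generalized von Neumann theorem (PET induction plus repeated Cauchy--Schwarz, each step controlled by the polynomial forms condition) to show that functions with small anti-uniform dual norm have nearly equal $\Lambda_{P_1,\dots,P_t}$-counts, and a dense model theorem obtained via polynomial approximation and Hahn--Banach (or CFZ densification) to produce $g$ with $0\le g\le 1$, $|\EE f-\EE g|<\eps$, and $\|f-g\|_{\mathcal D}^{\ast}$ small, after which the telescoping identity finishes the job. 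This is precisely the transference strategy the paper is invoking, so your proposal takes essentially the same approach; the only caveat is that, as you acknowledge, the genuinely hard content is in verifying that every auxiliary polynomial configuration produced by PET satisfies the hypotheses of Proposition~5.1 (degrees, number of polynomials, and the $O(W^C)$ coefficient bound), which this paper outsources entirely to \cite{Lott} rather than carrying out itself.
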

Now the proof of Theorem 1 is straightforward. 
\begin{proof}[Proof of Theorem 1] Given $A\subs \mathcal{A}\cap [N']$ with $|A|\geq \de\,|\mathcal{A}\cap [N']|$ we choose $W$, $N=[N'/W]$, $b$ and the function $f_A$ as in section $4$. Thus, we have \eqref{dense_f_A} and \eqref{pwbound}:

\[
\EE_{x\in[N]}\, f_A(x)\geq \de',\quad 0\leq f_A(x) \leq\nu_{\mathcal{A}} (x),
\]
with $\de'= c\delta$, and $\nu_{\mathcal{A}}$ satisfies the polynomial forms condition by Proposition \ref{polyforms}. Next, we have $Q_j(y):=\frac{P_j(Wy)}{W}\in\Z[y]$
(since $P_j(0)=0$). We apply Theorem C and Theorem D with $Q_1,\dots,Q_t$ in place of $P_1,\dots,P_t$.

By theorems $C$ and $D$ with $\eps>0$ sufficiently small with respect to $\delta'$ and $c(\de'\slash 2)$, we have a bounded $g:[N]\to [0,1]$ such that 
\[
\La_{Q_1,...,Q_t}f_A \geq \La_{Q_1,...,Q_t}g-
\eps-o(1) \geq c(\de'\slash 2)-\eps-o(1)\geq 
c(\de'\slash 2)/2-o(1)>0,
\]
as long as $L\geq L(Q_1,...,Q_t)$ and $N\geq N(\de,Q_1,...,Q_t)$ is sufficiently large. Then $\Lambda_{Q_1,\dots,Q_t}f_A>0$ yields $x,y\in \N$ such that $Wx+b+P_j(Wy)\in A$ for all $j$.
Setting $x_0:=Wx+b$ and $y_0:=Wy$, we obtain $x_0+P_j(y_0)\in A$ for all $j$, thus completing the proof.
\end{proof}

\section{Acknowledgments} The authors would like to thank Ákos Magyar for suggesting the problem.

\end{document}